\numberwithin{equation}{section}
\date{}
\def\SL{\text{\rm SL}}
\def\PSL{\text{\rm PSL}}
\def\vp{\varphi}
\def\Ker{\text{\rm Ker}}
\def\Ga{\Gamma}
\def\bbf{\mathbb{R}}
\def\bbz{\mathbb{Z}}
\def\bbn{\mathbb{N}}
\theoremstyle{plain}
\newtheorem{thm}{Theorem}[section]
\newtheorem*{thm*}{Theorem}
\newtheorem{lemma}[thm]{Lemma}%[section]
\newtheorem{prop}[thm]{Proposition}
\theoremstyle{definition} %fettes label, aber nich kursiv
\newtheorem{definition}[thm]{Definition}
\newtheorem*{definition*}{Definition}
\newtheorem{cor}[thm]
{Corollary}
\newtheorem*{claim*}{Claim}
\newtheorem{remark}[thm]{Remark}
\def\bbz{\mathbb{Z}}
\def\bbf{\mathbb{F}}
\def\be{\begin{equation}}
\def\ee{\end{equation}}
\def\calo{\mathcal{O}}
\theoremstyle{remark}  %kursives label. text roman
\begin{document}

\title{{Finiteness Properties \break and Profinite Completions}}

\author{Alexander Lubotzky}\thanks{This work is supported by ERC, NSF and ISF}
\maketitle
\centerline{\smaller{Einstein Institute of Mathematics}}
\centerline{\smaller{ Hebrew University of Jerusalem}}
\centerline{\smaller{Jerusalem 91904, Israel}}
\centerline{\smaller{alex.lubotzky@mail.huji.ac.il}}

%Shani - this line controls the spacing between the lines in the document!
\baselineskip 16pt

\begin{abstract}
In this note we show that various (geometric/homological) finiteness properties are not  profinite properties.  For example for every $1 \le k, \ell \le \bbn$, there exist two finitely generated residually finite groups $\Ga_1$ and $\Ga_2$ with isomorphic profinite completions, such that $\Ga_1$ is strictly of type $F_k$ and $\Ga_2$ of type $F_\ell$.
\end{abstract}

%\section*{0. Introduction}
\section{Introduction}

Let $\Ga$ be a finitely generated group.  What properties of $\Ga$ can be deduced from its finite quotients?  The question makes sense only for residually finite groups.  Moreover, two finitely generated groups $\Ga_1$ and $\Ga_2$ have the same set of (isomorphism classes of) finite quotients if and only if they have isomorphic profinite completion $\hat \Ga_1\simeq \hat\Ga_2$ (cf. \cite{DFPR}).  Let us therefore define:
\begin{definition} A property of groups ${\boldsymbol P}$ is called a {\bf profinite property} if whenever $\Ga_1 $ and $\Ga_2$ are finitely generated residually finite groups with $\hat \Ga_1 \simeq \hat \Ga_2$ and $\Ga_1$ has property ${\boldsymbol P}$, so does $\Ga_2$.

\end{definition}

In recent years there has been a growing interest in understanding what properties are profinite properties (cf.  [B1], \cite{La}, \cite{Ak} and the references therein and especially \cite{GZ} for a historical review and a systematic program of research). This resembles the quite intensive area of study of ``geometric properties", i.e. properties shared by all pairs of finitely generated groups with quasi-isometric Cayley graphs.

The current note was sparked by a lecture given by Martin Bridson in Park-City in July 2012, where he presented an example of two finitely generated residually finite groups $\Ga_1$ and $\Ga_2$ with isomorphic profinite completions, such that $\Ga_2$ is finitely presented while $\Ga_1$ is not.  So, in the above terminology, the property of being finitely presented is not a profinite property. See \cite{BR} for this and more.

Our main result is:

\begin{thm} For every $r$ and $s$ there exists finitely generated residually finite groups $\Ga_r$ and $\Ga_s$ such that $\Ga_r$ has property $F_r$ (and not $F_{r+1}$), $\Ga_s$ has $F_s$ (and not $F_{s+1})$ and $\hat \Ga_r$ is isomorphic to $\hat \Ga_s$.
\end{thm}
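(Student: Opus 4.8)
The plan is to realise both groups as iterated fibre products sitting inside one fixed power of an auxiliary group, exploiting the Grothendieck-pair phenomenon --- a finitely generated subgroup whose inclusion induces an isomorphism of profinite completions --- so that the profinite completions agree while the number of fibre-product factors governs the finiteness type. The starting point is a \emph{seed}: a short exact sequence $1 \to N \to \Gamma \xrightarrow{\pi} Q \to 1$ in which $\Gamma$ is a finitely generated residually finite group of type $F_\infty$, the group $Q$ is finitely presented with $H_1(Q;\bbz) = H_2(Q;\bbz) = 0$ and no proper subgroup of finite index, and $N$ is infinitely generated. Such a configuration is what the Bridson--Grunewald construction provides (a residually finite Rips-type construction over a suitably chosen super-perfect group without finite quotients); one may moreover take $\Gamma$ word-hyperbolic, which guarantees type $F_\infty$.

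For $k \ge 2$ put $P^{(k)} = \{(x_1,\dots,x_k) \in \Gamma^k : \pi(x_1) = \dots = \pi(x_k)\}$, the $k$-fold fibre product over $Q$. Two facts about $P^{(k)}$ drive the argument. First, since $Q$ is finitely presented, super-perfect, and has no finite quotients, the Platonov--Tavgen criterion (and its iterated form) shows that $P^{(k)} \hookrightarrow \Gamma^k$ induces an isomorphism of profinite completions, so $\widehat{P^{(k)}} \cong \widehat{\Gamma^k} \cong \hat\Gamma^{\,k}$. Second, since $N$ is infinitely generated while $\Gamma$ and $Q$ are of type $F_\infty$, the finiteness-properties machinery for subdirect products --- the $1$-$2$-$3$ theorem and its higher analogues, the $n$-$(n+1)$-$(n+2)$ results of Bridson--Howie--Miller--Short, Kuckuck and others, the same mechanism that makes the Stallings--Bieri groups $\ker\big((F_2)^k \to \bbz\big)$ of type $F_{k-1}$ but not $F_k$ --- shows that $P^{(k)}$ is of type $F_{k-1}$ but not of type $F_k$.

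Given $r$ and $s$, set $m = \max(r,s)+1$ and define $\Gamma_r = P^{(r+1)} \times \Gamma^{\,m-r-1}$ and $\Gamma_s = P^{(s+1)} \times \Gamma^{\,m-s-1}$, where the exponents are $\ge 0$. Each is finitely generated (a fibre product of finitely generated groups over a finitely presented quotient is finitely generated, as is a direct product of finitely generated groups) and residually finite (it embeds in $\Gamma^m$). A direct product of an $F_\infty$ group with a group of type exactly $F_r$ is again of type $F_r$, and it fails to be $F_{r+1}$ since it retracts onto the factor $P^{(r+1)}$; hence $\Gamma_r$ is of type $F_r$ but not $F_{r+1}$, and likewise $\Gamma_s$ is of type $F_s$ but not $F_{s+1}$. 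Finally $\widehat{\Gamma_r} \cong \hat\Gamma^{\,r+1} \times \hat\Gamma^{\,m-r-1} = \hat\Gamma^{\,m} \cong \widehat{\Gamma_s}$.

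The principal obstacle is the sharpness in the second fact: showing that $P^{(k)}$ is \emph{not} of type $F_k$. This is the converse direction of the fibre-product finiteness theorems; it is genuinely delicate, is where the hypothesis that $N$ is infinitely generated is really used, and requires the full strength of the asymmetric $n$-$(n+1)$-$(n+2)$ estimates. A secondary technical point is arranging the seed with all of its properties simultaneously --- residual finiteness of $\Gamma$ together with $Q$ super-perfect, without finite quotients, and $N$ infinitely generated --- but this is supplied by the Bridson--Grunewald circle of ideas.
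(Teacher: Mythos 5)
Your construction breaks down at its central claim, namely that the $k$-fold fibre product $P^{(k)}$ is of type $F_{k-1}$ but not $F_k$. Whatever the seed $1\to N\to \Ga \xrightarrow{\pi} Q\to 1$, for $2\le j\le k$ the coordinate projection $P^{(k)}\to P^{(j)}$, $(x_1,\dots,x_k)\mapsto(x_1,\dots,x_j)$, is split by the homomorphism $(x_1,\dots,x_j)\mapsto(x_1,\dots,x_j,x_j,\dots,x_j)$, so $P^{(j)}$ is a retract of $P^{(k)}$. By Wall's domination theorem (exactly the mechanism used in Lemma 2.5 of the paper for direct factors), a retract of a group of type $F_m$ is of type $F_m$; hence $\phi(P^{(k)})\le\phi(P^{(j)})$ for $j\le k$, i.e.\ the finiteness length of iterated fibre products over a fixed $Q$ is non-increasing in $k$. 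Your claims are therefore mutually inconsistent: if $P^{(2)}$ is not finitely presented (the $k=2$ sharpness you need, and the only place $N$ being infinitely generated could enter), then no $P^{(k)}$ with $k\ge 2$ is finitely presented, and in particular $\phi(P^{(r+1)})$ and $\phi(P^{(s+1)})$ can never take the two distinct values $r\ne s$ that your final assembly requires. The appeal to the Stallings--Bieri groups rests on a misidentification of the construction: the group of type $F_{k-1}$ and not $F_k$ is $\ker(F^k\to\bbz)$ with $F$ free of rank $2$ and a \emph{single} character summed over the factors, whereas the $k$-fold fibre product of $F\to\bbz$ is $\ker(F^k\to\bbz^{k-1})$, which retracts onto the $2$-fold fibre product (isomorphic to the first Bieri--Stallings group, finitely generated but not finitely presented) and so is never finitely presented for $k\ge 2$. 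The device that raises the finiteness length there -- passing to the kernel of one character of the product -- has no analogue for your $Q$, which is perfect and has no finite quotients, and in any case the Platonov--Tavgen/Bridson--Grunewald argument for $\widehat{P}\cong\widehat{\Ga^k}$ would not apply to such kernels. The $n$-$(n{+}1)$-$(n{+}2)$ theorems you invoke give only the positive direction (sufficient conditions for type $F_n$); they contain no statement forcing $\phi(P^{(k)})=k-1$, and, as the retraction shows, no such statement can be true.

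For contrast, the paper obtains the theorem without fibre products: by Stuhler, $\phi(\SL_2(\calo_S))=|S|$ for the ring of $S$-integers $\calo_S\subset\bbf_q(t)$, and by Serre these groups have the congruence subgroup property, so their profinite completions are explicit products of local groups $\SL_2(\calo_{\hat p})$ over $p\notin S$. Taking $\Ga_i=\SL_2(\calo_{S_i})\times\SL_2(\calo_{T_i})$ with $|S_i|=i$, $|T_i|$ large and $S_i\cup T_i$ a fixed set, Lemma 2.5 gives $\phi(\Ga_i)=i$ while the completions manifestly agree. The point is that there one has an exact computation of the finiteness length of each factor (Stuhler) \emph{and} an exact computation of the profinite completion (the congruence subgroup property); in your scheme the second ingredient is plausible but the first is precisely what fails, and certifying ``not $F_k$'' for large $k$ for subgroups of products is the genuinely hard step that a fibre-product approach over a fixed $Q$ cannot supply.
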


Recall that a countable group $\Gamma$ is said to have property $F_m$ if there exists an Eilenberg-MacLane complex $K(\Ga, 1)$ with finite $m$-skeleton. Every group is of type $F_0$.  Property $F_1$ is equivalent to being finitely generated while property $F_2$ is equivalent to being finitely presented.  Our theorem is therefore a far-reaching generalization of Bridson-Reid's example.

Let us denote $\phi (\Ga) = sup\{ m|\Ga$ has property $F_m\}$ and call $\phi(\Ga)$-the {\bf finiteness length} of $\Ga$.  So our theorem gives:

\begin{cor} The finiteness length is not a profinite property.
\end{cor}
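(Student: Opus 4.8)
\emph{Proof proposal.} The plan is to realise the two groups as a \emph{Grothendieck pair}: a finitely generated residually finite group $\Delta$ with a subgroup $P\le\Delta$ for which the inclusion induces an isomorphism $\widehat P\xrightarrow{\ \sim\ }\widehat\Delta$, arranged so that $\Delta$ and $P$ realise the two prescribed finiteness lengths. Fix (WLOG) $1\le r\le s$. Start from a short exact sequence $1\to N\to G\xrightarrow{\ \pi\ }Q\to1$ with $G$ finitely generated and residually finite and $Q$ finitely presented and \emph{superperfect} ($H_1(Q;\bbz)=H_2(Q;\bbz)=0$), and form the fibre product $P=\{(g,g')\in G\times G:\pi(g)=\pi(g')\}$. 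By the Platonov--Tavgen theorem (in the form used by Bridson--Grunewald), superperfection and finite presentability of $Q$ give $\widehat P\cong\widehat{G\times G}$; moreover $G\times G$, hence $P$, is residually finite, and $P$ is finitely generated (being generated by the diagonal of $G$ together with $N\times\{1\}$, once $N$ is finitely generated). So if we put $\Gamma_s:=G\times G$ and $\Gamma_r:=P$ we get $\widehat{\Gamma_r}\cong\widehat{\Gamma_s}$ for free, and the entire problem becomes one of \emph{finiteness bookkeeping}: make $\phi(G)=s$ (so $\phi(\Gamma_s)=\phi(G\times G)=s$) and $\phi(P)=r$.

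For $\phi(P)$ one uses the $n$--$(n{+}1)$--$(n{+}2)$ theorem --- the $1$--$2$--$3$ theorem of Baumslag--Bridson--Miller--Short and its higher analogues due to Bridson--Howie--Miller--Short and Kuckuck --- together with the elementary behaviour of the properties $F_m$ in the extensions $1\to N\to P\to G\to1$ (first projection) and $1\to N\times N\to P\to Q\to1$ (diagonal). In the relevant regime these give that $P$ is of type $F_r$ once $N$ is of type $F_{r-1}$, $G$ of type $F_r$ and $Q$ of type $F_{r+1}$; and the converse direction of the $1$--$2$--$3$ theorem --- applied with $N$ of type $F_r$ and $G$ of type $F_{r+1}$ --- forces $P$ \emph{not} to be of type $F_{r+1}$ as soon as $Q$ fails to be of type $F_{r+2}$ (when $s=r$ this last step instead comes directly from $1\to N\to P\to G\to1$, since then $G$ itself is not of type $F_{r+1}$). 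Hence it suffices to produce, for every $1\le r\le s$, a short exact sequence $1\to N\to G\to Q\to1$ with $G$ residually finite of type $F_s$ but not $F_{s+1}$; with $Q$ finitely presented, superperfect, of type $F_{r+1}$ but not $F_{r+2}$; and with $N$ of type $F_r$ but not $F_{r+1}$.

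Two sources feed this construction. For prescribed finiteness length one has the Bieri--Stallings groups and, more flexibly, the Bestvina--Brady groups $\mathrm{BB}_L$, whose finiteness length is controlled by the connectivity of the flag complex $L$ (e.g. $\mathrm{BB}_L$ is of type $F_m$ but not $F_{m+1}$ when $L$ is a flag triangulation of $S^m$); superperfect examples of each prescribed type can be extracted either by a suitable modification of these or from suitable $S$--arithmetic groups over function fields (e.g. $\SL_{m+2}(\bbf_q[t])$ for $m,q$ large, which is perfect with vanishing $H_2$ and has finiteness length $m$). For turning a prescribed finitely presented group into a quotient of a residually finite group of type $F_\infty$ with finitely generated kernel one has the Rips construction in Wise's residually finite (or Haglund--Wise's special) version. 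The short exact sequences above are assembled by splicing: resolve a superperfect $Q$ of type $F_{r+1}$-but-not-$F_{r+2}$ by a Rips group, and build in Bestvina--Brady factors so as to raise the finiteness lengths of the kernel and of the total group to the required values $r$ and $s$, while preserving residual finiteness of $G$ and superperfection of $Q$. This splicing --- interlocking, in a single extension, three prescribed and generally distinct finiteness lengths together with superperfection of the quotient and residual finiteness of the middle group --- is the step I expect to be the main obstacle; once it is carried out, setting $\Gamma_r=P$ and $\Gamma_s=G\times G$ proves the theorem, and the Corollary is immediate since $\phi(\Gamma_r)=r\ne s=\phi(\Gamma_s)$ while $\widehat{\Gamma_r}\cong\widehat{\Gamma_s}$.
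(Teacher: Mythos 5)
Your plan is not the paper's argument, and as it stands it has genuine gaps beyond the one you flag yourself. First, the Platonov--Tavgen/Bridson--Grunewald criterion for a fibre product $P\le G\times G$ to give $\widehat P\cong\widehat{G\times G}$ requires more than superperfection of $Q$: it requires that $Q$ be finitely presented, that $H_2(Q;\bbz)=0$, \emph{and that $Q$ have no nontrivial finite quotients} ($\widehat Q=1$). Without the last condition the map $\widehat P\to\widehat{G\times G}$ is not an isomorphism, and your proposed sources of $Q$ (Bestvina--Brady groups, or $\SL_{m+2}(\bbf_q[t])$) have an abundance of finite (congruence) quotients, so they cannot serve as the quotient in a Grothendieck pair. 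Producing a group with $\widehat Q=1$, finitely presented, and of prescribed finiteness length $F_{r+1}$ but not $F_{r+2}$ is itself a serious open-ended task. Second, the ``converse direction of the 1--2--3 theorem'' you invoke --- that $P$ fails to be of type $F_{r+1}$ as soon as $Q$ fails to be of type $F_{r+2}$ --- is not a theorem; upper bounds on the finiteness properties of fibre products are notoriously delicate and are established by ad hoc arguments in each known example, not by a general converse. Third, the ``splicing'' step that you yourself identify as the main obstacle (a single extension $1\to N\to G\to Q\to 1$ interlocking three prescribed finiteness lengths with residual finiteness of $G$ and the conditions on $Q$) is exactly where the construction would have to happen, and it is not carried out. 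So the proposal is a programme, not a proof.

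For contrast, the paper's route is far more economical and avoids fibre products entirely. By Stuhler's theorem, $\phi(\SL_2(\calo_S))=|S|$ for $\calo_S$ the ring of $S$-integers in $\bbf_q(t)$, and by Serre's congruence subgroup property $\widehat{\SL_2(\calo_S)}=\SL_2(\hat\calo_S)=\prod_{p\notin S}\SL_2(\calo_{\hat p})$ once $|S|\ge 1$. Taking $\Ga_i=\SL_2(\calo_{S_i})\times\SL_2(\calo_{T_i})$ with $S_i\cup T_i$ a fixed set $S\cup T$ and $|S_i|=i$, $|T_i|=m+n-i$, one gets $\phi(\Ga_i)=\min(i,m+n-i)=i$ from the elementary lemma $\phi(\Ga\times\Ga')=\min(\phi(\Ga),\phi(\Ga'))$ (via Wall), while the profinite completion $\prod_{p\notin S\cup T}\SL_2(\calo_{\hat p})^2\times\prod_{p\in S\cup T}\SL_2(\calo_{\hat p})$ is visibly independent of $i$. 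The Corollary is then immediate from the Theorem: two groups with isomorphic profinite completions and different finiteness lengths. If you want a working proof, this CSP-plus-Stuhler product trick is the mechanism to internalize; the Grothendieck-pair machinery is what the paper uses only later (Proposition 1.6, on cohomological dimension), and even there the hard profinite input is quoted from the literature rather than re-proved.
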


The theorem is  deduced in \S 2 from various deep results on arithmetic groups over positive characteristic  function fields.  A similar trick is used to deduce the following somewhat surprising result:

\begin{prop} Being residually solvable (resp. residually nilpotent, residually $-p$) is not a profinite property.
\end{prop}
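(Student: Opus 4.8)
The plan is to argue as in \S 2, but tracking residual properties instead of finiteness length. We realise $\Gamma_1$ and $\Gamma_2$ as finitely generated \emph{solvable} (indeed metabelian) $S$-arithmetic groups over a rational function field $K=\mathbb{F}_q(t)$ of characteristic $p$: concretely, as (finite-index subgroups of) Borel $S$-arithmetic groups $\mathbf{B}(\mathcal{O}_S)=\mathbf{U}(\mathcal{O}_S)\rtimes\mathbf{T}(\mathcal{O}_S)$ of $\mathrm{SL}_2$, i.e.\ as groups $A\rtimes\mathbb{Z}^k$ in which $A$ is a ring of $S$-integers of $K$ and $\mathbb{Z}^k$ acts by multiplication by units $v_1,\dots ,v_k\in A^{\times}$. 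A metabelian group is automatically residually solvable, so it suffices to produce one such pair with $\widehat{\Gamma_1}\cong\widehat{\Gamma_2}$, with $\Gamma_1$ residually $p$ (hence residually nilpotent, hence residually solvable) and $\Gamma_2$ \emph{not} residually nilpotent (hence also not residually $p$): that single pair witnesses all three assertions.

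The residual behaviour of $\Gamma=A\rtimes\mathbb{Z}^k$ is governed by an elementary commutator computation. Setting $I=(v_1-1,\dots ,v_k-1)\subseteq A$, one finds $\gamma_{n+1}(\Gamma)=I^{n}A$ and $\Gamma^{(2)}=\{0\}$, whence $\bigcap_n\gamma_n(\Gamma)=\bigcap_n I^{n}A$; since $A$ is a Noetherian domain, this intersection is $\{0\}$ when $I$ is a proper ideal (Krull intersection theorem) and is all of $A$ when $I=A$. So $\Gamma$ is residually nilpotent precisely when the $v_i$ have a common point of $\operatorname{Spec}A$ at which they all take the value $1$; and in that case the same computation with the mod-$p$ lower central series — legitimate because $\operatorname{char}K=p$ makes the relevant finite quotients $A/I^{n}A$ finite $p$-groups — shows $\Gamma$ is in fact residually $p$. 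One therefore chooses $A$ together with two tuples of multipliers so that the associated ideal is proper on the $\Gamma_1$-side and is all of $A$ on the $\Gamma_2$-side — say with $A=\mathbb{F}_q[t,t^{-1},f(t)^{-1}]$ for a suitable $f$, the multipliers on one side arranged so that all $v_i-1$ vanish simultaneously at a point of $\operatorname{Spec}A$ and those on the other side not — while keeping $A$ module-finite over $\mathbb{Z}[v_1^{\pm1},\dots ,v_k^{\pm1}]$ so that $\Gamma$ is finitely generated, which is a mild restriction on the choices.

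The substantive step — and the main obstacle — is to arrange $\widehat{\Gamma_1}\cong\widehat{\Gamma_2}$ despite this difference. Since $\widehat{\Gamma_i}=\widehat A\rtimes\widehat{\mathbb{Z}}^k$ with the induced action on the additive profinite group $\widehat A$, it is enough to make the two maps $\mathbb{Z}^k\to\operatorname{Aut}(\widehat A)$ conjugate under $\operatorname{Aut}(\widehat A)\times\mathrm{GL}_k(\widehat{\mathbb{Z}})$; equivalently, the two subgroups $\langle v_1,\dots ,v_k\rangle\subseteq A^{\times}$ should have the same closure in $\operatorname{Aut}(\widehat A)$, compatibly with their $\widehat{\mathbb{Z}}^k$-structures, even though they are genuinely different subgroups of the discrete unit group $A^{\times}$ with different ``$v_i-1$'' behaviour. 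This is the function-field counterpart of the elementary but non-vacuous fact that $\mathbb{Z}[1/N]\rtimes_u\mathbb{Z}$ and $\mathbb{Z}[1/N]\rtimes_{u'}\mathbb{Z}$ can have isomorphic profinite completions for suitably matched multipliers; in positive characteristic there is extra room to engineer such coincidences, for instance by exploiting the $p$-power structure of $A$ (under which a factor $v-1$ becomes $(v-1)^{p}$). Pinning down $A$, the two actions and the $\mathrm{GL}_k(\widehat{\mathbb{Z}})$-identification at once — so that the discrete computation of $I$ comes out proper on one side and trivial on the other while the profinite computation produces an isomorphism — is where the work lies; granting it, the pair $(\Gamma_1,\Gamma_2)$ proves the Proposition.
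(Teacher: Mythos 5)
There are two genuine gaps here. First, your choice of \emph{metabelian} groups $A\rtimes\mathbb{Z}^k$ defeats the ``residually solvable'' part of the statement before you start: a finitely generated metabelian group is residually finite (P.~Hall) and all of its finite quotients are solvable, so \emph{both} of your groups $\Gamma_1,\Gamma_2$ are residually (finite) solvable, and no pair of this kind can show that residual solvability fails to be a profinite property. At best your scheme could address the residually nilpotent and residually-$p$ clauses. The paper gets the solvable clause precisely by using a group with \emph{no} nontrivial solvable quotients at all: it takes $\Gamma_S=\SL_2(\calo_S)$ with $|S|=2$, where Margulis' normal subgroup theorem together with Serre's congruence subgroup property forces every nontrivial finite quotient to surject onto a nonabelian simple group $\PSL_2(q^a)$, and pairs $\Gamma_2=\Gamma_S\times\Gamma_S(p_3p_4)$ (not residually solvable) against $\Gamma_1=\Gamma_S(p_3)\times\Gamma_S(p_4)$ (residually $p$, since a principal congruence subgroup embeds in the pro-$p$ congruence kernel of $\SL_2((\calo_S)_{\hat p_3})$).

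Second, and decisively, the step you yourself flag as ``where the work lies'' --- actually producing two such semidirect products with isomorphic profinite completions but different behaviour of the ideal $I=(v_1-1,\dots,v_k-1)$ --- is not carried out, and nothing in the proposal indicates how it would be; ``granting it'' is granting the whole theorem, since exhibiting a pair with $\hat\Gamma_1\simeq\hat\Gamma_2$ distinguishing the residual property \emph{is} the Proposition. Moreover the formula $\widehat{A\rtimes\mathbb{Z}^k}=\widehat A\rtimes\widehat{\mathbb{Z}}^k$ on which your reduction rests is not correct in general: the closure of $A$ in the profinite completion of $\Gamma$ is the completion of $A$ with respect to the finite-index subgroups admitting a $\mathbb{Z}^k$-invariant finite-index subgroup inside them, not the full profinite completion of the (infinite-rank) abelian group $A$, so even the target of your intended $\mathrm{GL}_k(\widehat{\mathbb{Z}})$-matching is misidentified. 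The paper avoids all of this by making the isomorphism of completions essentially automatic: by Serre's theorem $\widehat{\SL_2(\calo_S)}=\prod_{p\notin S}\SL_2(\calo_{\hat p})$, and $\widehat{\Gamma_S(\mathfrak{a})}$ differs from it only by replacing finitely many factors with their (open) congruence kernels; redistributing the primes $p_3,p_4$ between the two direct factors of $\Gamma_1$ and $\Gamma_2$ then yields literally the same product of profinite groups on both sides. If you want to salvage your metabelian route for the nilpotent and $p$ clauses, you would need to supply an explicit pair and verify the completion isomorphism by hand --- but you would still need a separate construction, along the paper's lines, for the solvable clause.
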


The same trick when applied in \S 3 for arithmetic groups in characteristic zero gives us

\begin{prop} {\rm (a)} Being torsion-free is not a profinite property.

{\rm (b)} Having trivial center is not a profinite property.
\end{prop}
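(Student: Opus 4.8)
The plan is to produce, for each part, a pair of $S$-arithmetic groups $\Ga_1,\Ga_2$ in semisimple simply connected $\bbq$-groups $\bg_1,\bg_2$ that become isomorphic over $\bbq_v$ for \emph{every} place $v\notin S$, and then to force $\hat\Ga_1\simeq\hat\Ga_2$ by the congruence subgroup property: if $\bg_i$ has the CSP with trivial congruence kernel and $\sum_{v\in S}\rank_{\bbq_v}\bg_i\ge 2$, then $\hat\Ga_i$ is the congruence completion $\prod_{v\notin S}\bg_i(\calo_v)$, which depends only on the local groups outside $S$. So if $\bg_1$ and $\bg_2$ differ only over the finitely many places of $S$ — where they may be made very different, e.g.\ split versus anisotropic — the profinite completions coincide while the groups themselves need not.

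For (a) I take $\bg_1=\SL_\ell$ over $\bbq$, with $\ell$ an odd prime, and $\bg_2=\SL_1(D)$, the norm-one group of a central division algebra $D$ of degree $\ell$ over $\bbq$ with local invariants $\pm a/\ell$ at two primes $p_1,p_2$ and $0$ elsewhere, so that $D$ is ramified exactly at $T=\{p_1,p_2\}$ and split at $\infty$; fix a maximal order in $D$ and put $S=T\cup\{\infty\}$, $\Ga_i=\bg_i(\bbz[1/T])$. For $p\notin T$ one has $D\otimes\bbq_p\cong M_\ell(\bbq_p)$, hence (via the maximal order) $\bg_2(\bbz_p)\cong\SL_\ell(\bbz_p)=\bg_1(\bbz_p)$, so both congruence completions equal $\prod_{p\notin T}\SL_\ell(\bbz_p)$. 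Since $\ell\ge 3$, the real place alone gives $S$-rank $\ge\ell-1\ge2$; $\SL_\ell(\bbz[1/T])$ has trivial congruence kernel by Bass--Milnor--Serre, and the congruence (metaplectic) kernel of $\SL_1(D)(\bbz[1/T])$ vanishes by the Prasad--Rapinchuk computation (using that $\bbq$ has a real place), so $\hat\Ga_1\simeq\hat\Ga_2$. Finally $\SL_\ell(\bbz)$ contains the permutation matrix of an $\ell$-cycle, an even permutation since $\ell$ is odd, hence an element of order $\ell$; whereas a nontrivial torsion element of $\Ga_2\subset D^\times$ generates a cyclotomic subfield $\bbq(\zeta_d)\hookrightarrow D$ with $\vp(d)\mid\ell$, forcing $d\in\{1,2\}$, and $-1\notin\SL_1(D)$ because $\operatorname{Nrd}(-1)=(-1)^\ell=-1$; so $\Ga_2$ is torsion-free (and, being finite-centred only if it had torsion, also centreless). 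This proves (a).

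For (b) I would run the same machine, but the pair from (a) keeps the two centres equal — inner forms of a fixed group have the same scheme-theoretic centre — so a genuinely new input is needed: I want $\bg_1,\bg_2$ still locally isomorphic off $S$ yet with $Z(\bg_1)(\bbq)\ne Z(\bg_2)(\bbq)$, i.e.\ one $S$-arithmetic group acquiring a nontrivial finite centre while the other stays centreless, the common congruence completion being untouched. Producing such a pair, while simultaneously keeping the congruence kernel under control, is the step I expect to be the main obstacle: one must play the isogeny type of $\bg_i$ (or the roots of unity available at the ramified places of $S$) against the local structure outside $S$, and then re-run the Prasad--Raghunathan / Prasad--Rapinchuk analysis of the metaplectic kernel, so that $\hat\Ga_1$ and $\hat\Ga_2$ come out honestly isomorphic rather than merely isomorphic up to a finite central subgroup.
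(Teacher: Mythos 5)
Your part (a) hinges on the assertion that $\widehat{\Ga_2}$ coincides with the congruence completion $\prod_{p\notin T}\SL_\ell(\bbz_p)$ for $\Ga_2=\SL_1(D)(\bbz[1/T])$, justified ``by the Prasad--Rapinchuk computation''. That is a genuine gap: Prasad--Rapinchuk compute the \emph{metaplectic kernel} $M(S,G)$, which controls the congruence kernel only \emph{after} one knows the kernel is central, and the centrality of the congruence kernel for anisotropic inner forms of type $A_{n}$ --- precisely the groups $\SL_1(D)$ with $D$ a noncommutative division algebra --- is a well-known open problem (it is not covered by Raghunathan's centrality theorems, which require $k$-isotropy, nor by the known anisotropic cases, which exclude type $A$). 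Having $S$-rank $\ge 2$ via the split real place does not help here. So your $\hat\Ga_1\simeq\hat\Ga_2$ is not established by current knowledge; the torsion-freeness argument for $\SL_1(D)$ (via $\vp(d)\mid\ell$ and $\mathrm{Nrd}(-1)=-1$) is fine, but it sits on an unproved foundation. In addition, part (b) is not proved at all: as you yourself note, your pair has identical (trivial) centers, and the ``new input'' you ask for is exactly the missing construction, so the proposal does not yield the second half of the statement.

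For comparison, the paper avoids all input beyond Serre's congruence subgroup property for $\SL_2(\bbz_S)$, $S\ne\emptyset$, by a redistribution trick inside a single group: with $\Ga_S=\SL_2(\bbz[1/7])$ it takes $\Ga_1=\Ga_S\times\Ga_S(15)$ and $\Ga_2=\Ga_S(3)\times\Ga_S(5)$. The congruence conditions at $3$ and $5$ are merely split differently between the two direct factors, so CSP gives the same profinite completion, namely $\prod_{p\neq 3,5,7}\SL_2(\bbz_p)^2\times\SL_2(\bbz_3)\times\SL_2(\bbz_5)\times\Ker(\SL_2(\bbz_3)\to\SL_2(\bbf_3))\times\Ker(\SL_2(\bbz_5)\to\SL_2(\bbf_5))$; meanwhile $-I$ in the factor $\Ga_S$ gives $\Ga_1$ both torsion and nontrivial center, whereas $\Ga_2$, a product of congruence subgroups of levels $3$ and $5$, is torsion-free and centerless --- settling (a) and (b) simultaneously. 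If you want to salvage your line of attack, you would need groups for which centrality of the congruence kernel is actually known; the product-of-congruence-subgroups device is the cheap way to get both properties at once.
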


Additional results on lattices in Lie groups give:

\begin{prop} Cohomological dimension is not a profinite property
\end{prop}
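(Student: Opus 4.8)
The plan is to realise $\Ga_1$ and $\Ga_2$ as cocompact $S$-arithmetic lattices in $\bbq$-anisotropic inner forms of $\SL_2$ satisfying the congruence subgroup property, chosen so that their congruence completions coincide while the products of a symmetric space and Bruhat--Tits buildings on which they act cocompactly have different dimensions. The underlying principle is that if a torsion-free group $\Ga'$ acts freely, properly and cocompactly on a contractible, finite-dimensional, locally finite complex $X$, then $H^*(\Ga';\bbz\Ga')\cong H^*_c(X)$, so $\operatorname{cd}(\Ga')=\dim X$ as soon as $H^{\dim X}_c(X)\neq 0$; when $X$ is a product of a symmetric space of noncompact type (diffeomorphic to some $\bbr^d$, so $H^*_c$ concentrated in degree $d$) and finitely many infinite locally finite trees (each with $H^1_c\neq 0$), this holds by the K\"unneth formula for compactly supported cohomology.

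Concretely I would fix five distinct primes $p_1,\dots,p_5$, set $S=\{\infty,p_1,\dots,p_5\}$ and $\calo_S=\bbz[1/(p_1\cdots p_5)]$, and pick quaternion division algebras $D_1,D_2$ over $\bbq$ with $D_1$ ramified exactly at $\{p_1,p_2\}$ and $D_2$ ramified exactly at $\{p_1,p_2,p_3,p_4\}$ (both are nonempty even sets avoiding $\infty$, so such division algebras exist). Let $\bg_i=\SL_1(D_i)$, a simply connected, absolutely almost simple, $\bbq$-anisotropic $\bbq$-group, and $\Ga_i=\bg_i(\calo_S)$. Then: (i) by Borel--Harish-Chandra (and its $S$-arithmetic generalization) each $\Ga_i$ is a cocompact lattice in $G_i=\prod_{v\in S}\bg_i(\bbq_v)$, which is noncompact since its $\infty$-factor is $\SL_2(\bbr)$; (ii) by strong approximation the congruence completion of $\Ga_i$ is $\prod_{v\notin S}\bg_i(\bbz_v)$, and since both $D_1$ and $D_2$ are split at every place outside $S$ this is $\cong\prod_{v\nmid\infty,\ v\notin S}\SL_2(\bbz_v)$ for both $i$; (iii) the $S$-rank of $\bg_1$ is $4$ and that of $\bg_2$ is $2$, so in both cases the $S$-rank is at least $2$ and $\bg_i$ is isotropic at the real place of $S$, whence by the congruence subgroup property (the relevant known cases of Serre's conjecture) the congruence kernel is trivial and $\hat\Ga_i$ equals the congruence completion. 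Combining (ii) and (iii) gives $\hat\Ga_1\simeq\hat\Ga_2$.

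For the cohomological dimensions I would pass to torsion-free finite-index subgroups $\Ga_i'$, chosen to correspond, under a fixed isomorphism $\hat\Ga_1\simeq\hat\Ga_2$, to one and the same open subgroup (e.g.\ the closure of a principal congruence subgroup of sufficiently high level, shrunk so that both intersections are torsion-free); then $\hat\Ga_1'\simeq\hat\Ga_2'$ as well. Now $\Ga_i'$ acts freely, properly and cocompactly on $X_i=\bbh^2\times\prod_{q\in S,\ q\nmid\infty}B_{i,q}$, where $B_{i,q}$ is the Bruhat--Tits building of $\bg_i(\bbq_q)$: a point when $D_i$ is ramified at $q$ (so $\bg_i(\bbq_q)$ is compact) and a tree when $D_i$ is split at $q$. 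Hence $X_1=\bbh^2\times T_{p_3}\times T_{p_4}\times T_{p_5}$ and $X_2=\bbh^2\times T_{p_5}$, so by the principle above $\operatorname{cd}(\Ga_1')=\dim X_1=5$ while $\operatorname{cd}(\Ga_2')=\dim X_2=3$, even though $\hat\Ga_1'\simeq\hat\Ga_2'$; this proves the proposition.

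The main difficulty is reconciling these demands. Forcing the congruence completions to be isomorphic essentially forces $D_1$ and $D_2$ to be locally isomorphic at every place outside $S$, i.e.\ ramified only inside $S$; yet the cohomological dimension is read off precisely from the local picture at $S$ and at $\infty$, which must therefore differ, and in addition both $S$-ranks must stay $\ge 2$ (and the groups must be isotropic at a real place of $S$), for otherwise the congruence subgroup property fails and $\hat\Ga_i$ is no longer the congruence completion. Meeting all of these simultaneously is exactly what pushes one to use five primes with nested ramification sets of distinct even cardinality. The genuinely deep ingredient, which carries most of the weight, is the congruence subgroup property for these $\bbq$-anisotropic inner forms of $\SL_2$ --- Serre's conjecture in the cases at hand, via the work of Serre, Bass--Milnor--Serre and Prasad--Rapinchuk on the metaplectic kernel; granting it, the building geometry, the K\"unneth computation of $H^*_c$, and the bookkeeping with quaternion algebras are routine.
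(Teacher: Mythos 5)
Your construction is a genuinely different route from the paper's (which takes Grothendieck pairs \`a la Bass--Lubotzky: $\Ga_2=L\times L$ for $L$ a cocompact lattice in $Sp(n,1)$ and $\Ga_1$ a fiber product over an infinite quotient with no finite quotients, so that $cd(\Ga_1)<cd(\Ga_2)$ while $\hat\Ga_1\simeq\hat\Ga_2$), but as written it has a genuine gap at the step that carries all the weight: the congruence subgroup property in item (iii). Your groups $\bg_i=\SL_1(D_i)$ are $\bbq$-\emph{anisotropic} inner forms of type $A_1$, and for such groups the centrality (let alone triviality) of the congruence kernel is \emph{not} a known case of Serre's conjecture --- it is one of the notorious open cases of the congruence subgroup problem. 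Serre's theorem covers the split group $\SL_2$ over number fields with $|S|\ge 2$; Bass--Milnor--Serre covers split higher-rank groups; Raghunathan's centrality theorem requires $k$-isotropy; and the Prasad--Rapinchuk computation of the metaplectic kernel only identifies the congruence kernel \emph{after} centrality is established, which is exactly what is missing for $\SL_1(D)$ with $D$ a quaternion division algebra (the known results in this direction, e.g.\ Rapinchuk--Segev--Seitz on quotients of $D^*$, settle the Margulis--Platonov conjecture for these groups but not the CSP). Without this, you only know that the \emph{congruence} completions of $\Ga_1$ and $\Ga_2$ are both $\prod_{v\notin S}\SL_2(\bbz_v)$; you cannot conclude $\hat\Ga_1\simeq\hat\Ga_2$, since the profinite completion could differ from the congruence completion by a possibly huge (even non-central, a priori infinite) kernel, and nothing forces these kernels to match for $D_1$ and $D_2$.

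The rest of your argument is sound: the local--global bookkeeping with the two quaternion algebras ramified only inside $S$, strong approximation identifying the congruence completions, cocompactness from $\bbq$-anisotropy, passage to compatible torsion-free congruence subgroups, and the computation $cd(\Ga_i')=\dim X_i$ via compactly supported cohomology of $\bbh^2$ times a product of trees are all fine. So the scheme would work if you replaced the anisotropic type $A_1$ forms by anisotropic groups for which centrality of the congruence kernel is actually known (e.g.\ suitable spinor or special unitary groups treated by Kneser, Rapinchuk and Tomanov), arranging two forms in the same genus away from $S$ whose local ranks inside $S$ differ; but that requires a different and more careful choice of groups than the one you made. As it stands, the key isomorphism $\hat\Ga_1\simeq\hat\Ga_2$ rests on an open problem, whereas the paper's argument avoids CSP for the $cd$ statement entirely by using fiber products and Olshanskii-type quotients.
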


We conclude in \S 4 with some related remarks, questions and suggestions for further research.\

\medskip
\noindent{\bf Acknowledgments:} The author acknowledges useful conversations with Martin Bridson and Kevin Wortman during the above mentioned Park City conference.

\section{Arithmetic groups of positive characteristic}

We now prove a much stronger version of Theorem 1.2.

\begin{thm} For every $1 \le n \in \bbn$, there exist residually finite groups $\Ga_0, \Ga_1, \ldots, \Ga_n$ with isomorphic profinite completions and with $\phi(\Ga_i) = i $ for $0 \le i \le n$.

\end{thm}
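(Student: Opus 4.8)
The plan is to build the groups as arithmetic lattices in products of rank-one groups over local fields of positive characteristic, where one controls the finiteness length via the number of factors, while simultaneously arranging that different choices give the same profinite completion. The engine is the combination of the following deep facts: (i) by Bux–Wortman (building on Abramenko, Abels, Behr, and others), an $S$-arithmetic group $\Gamma = \mathbf{G}(\calo_S)$ with $\mathbf{G}$ absolutely almost simple of $\bbf_p$-rank one over a global function field $K$, and $|S| = d$ places, acts on a product of $d$ Bruhat–Tits trees, and has finiteness length exactly $d-1$ (it is of type $F_{d-1}$ but not $F_d$); (ii) by the congruence subgroup property failing in a controlled way, or rather by the Margulis–Prasad–Rapinchuk description plus strong approximation, the profinite completion $\hat\Gamma$ is built from the congruence completion and is insensitive to the particular \emph{archimedean-like} data one removes — more precisely, one wants two $S$-arithmetic groups with the same congruence completion but with $S$ of different sizes, which is impossible for a fixed $\mathbf{G}$, so instead one varies $\mathbf{G}$ (or $K$) as well.

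Concretely, I would proceed as follows. First, fix the target profinite group: take a suitable $\mathbf{G}$ over $\bbf_q(t)$ of $\bbf_q(t)$-rank one, with good reduction outside a controlled set, so that $\mathbf{G}(\widehat{\calo})$ (a product of local groups $\mathbf{G}(\calo_v)$ over \emph{all} places $v$) is pinned down. Second, for each $i$ with $0 \le i \le n$, choose a set $S_i$ of places with $|S_i| = i+1$ such that the $S_i$-arithmetic group $\Gamma_i = \mathbf{G}(\calo_{S_i})$ has the congruence subgroup property — here one uses that for rank-one groups over function fields CSP can genuinely hold or fail, and the cases of interest (e.g. certain forms, or $\mathrm{SL}_2$ with enough places) are where it holds — so that $\hat\Gamma_i$ is the closure of $\Gamma_i$ in $\prod_{v \notin S_i} \mathbf{G}(\calo_v) \times \prod_{v \in S_i}(\text{unit group, trivial or compact part})$; the point is that removing the places in $S_i$ from the product over all places does not change the profinite completion, because $\mathbf{G}(\calo_v)$ for $v \in S_i$ is a compact open subgroup of $\mathbf{G}(K_v)$ and strong approximation identifies $\hat\Gamma_i$ with the same full adelic-away-from-ramification object regardless of $i$. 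Third, invoke fact (i) to get $\phi(\Gamma_i) = |S_i| - 1 = i$.

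The main obstacle — and the step that needs the most care — is reconciling (ii) with the constraint in (i): for a \emph{fixed} $\mathbf{G}/K$, the groups $\mathbf{G}(\calo_{S_i})$ with $|S_i|$ varying simply do \emph{not} have isomorphic profinite completions, precisely because the CSP would force $\hat\Gamma_i \cong \prod_{v\notin S_i}\mathbf{G}(\calo_v)$ and these differ for different $S_i$ (they omit different factors, and one cannot have $\prod_{v \ne w_1} \cong \prod_{v \ne w_1, w_2}$ as profinite groups when the factors are pairwise non-isomorphic). Therefore the real construction must vary the ambient group as well: one picks a \emph{single} absolutely almost simple simply connected $\mathbf{G}$ over $K = \bbf_q(t)$ and exploits that $\mathbf{G}$ can have $K_v$-rank one at some places and be $K_v$-anisotropic (hence $\mathbf{G}(\calo_v)$ compact, contributing nothing to finiteness length and simply becoming a profinite factor) at others; by choosing inner forms of the \emph{same} $\mathbf{G}$ that are split at exactly the places in a chosen $S_i$ and anisotropic elsewhere — all sharing the same adelic closure because inner forms become isomorphic over $K_v$ at all but finitely many places and one can control the finitely many — one obtains the $\Gamma_i$ with the desired finiteness length $i$ from the $|S_i| = i+1$ split places, while the profinite completions coincide because the congruence completions (a product of the local compact groups, which agree away from the controlled finite set) agree and CSP holds. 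Establishing that CSP holds for all the $\Gamma_i$ simultaneously, and that the local discrepancies at the finitely many bad places can be absorbed, is the crux; everything else is assembling the quoted theorems.

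Wait, I should double-check: the cleanest route, and the one I would actually write, is to take $\mathbf{G} = \mathrm{SL}_1(D)$ for $D$ a division algebra over $K$, or forms thereof, all of which are $K_v$-anisotropic outside a prescribed finite set and $K_v$-split (rank one, giving a tree) inside it — the Bruhat–Tits building is then a product of $|S_i|$ trees, Bux–Wortman gives $\phi = |S_i|-1$, and Margulis/Prasad–Rapinchuk CSP (which does hold for these higher-rank-in-the-adelic-sense, i.e. $S$-rank $\ge 2$, forms) gives $\hat\Gamma_i = \mathbf{G}(\widehat{\calo_{S_i}}) = \prod_{v \notin S_i}\mathbf{G}(\calo_v)$; choosing the division algebras so that these products agree (same ramification structure, differing only by which finitely many local factors are ``turned on'' as split versus anisotropic, arranged to be abstractly isomorphic as local profinite groups) yields the theorem. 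The hard part remains verifying that such a family of division algebras with matching local profinite data but differing split-place counts exists, which is a Brauer-group / local-global computation over $\bbf_q(t)$.
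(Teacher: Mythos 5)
There is a genuine gap: you correctly diagnosed the central obstruction (for a fixed $\bg/K$ with CSP, the completions $\prod_{v\notin S_i}\bg(\calo_v)$ genuinely depend on $|S_i|$), but the escape route you propose does not work, and you miss the paper's much simpler device. The paper never varies the form at all: it takes \emph{direct products} of two copies of $\SL_2$, namely $\Ga_i = \SL_2(\calo_{S_i})\times\SL_2(\calo_{T_i})$ where $S_i=\{p_1,\dots,p_i\}$ and $T_i=T\cup\{p_{i+1},\dots,p_n\}$, so that $S_i\cup T_i=S\cup T$ is the same set for every $i$. By Serre's CSP the completion of $\Ga_i$ is $\prod_{p\notin S_i}\SL_2(\calo_{\hat p})\times\prod_{p\notin T_i}\SL_2(\calo_{\hat p})$, which rearranges to $\prod_{p\notin S\cup T}\SL_2(\calo_{\hat p})^2\times\prod_{p\in S\cup T}\SL_2(\calo_{\hat p})$, independent of $i$; and a lemma deduced from Wall ($\phi(\Ga\times\Delta)=\min(\phi(\Ga),\phi(\Delta))$), combined with Stuhler's $\phi(\SL_2(\calo_S))=|S|$, gives $\phi(\Ga_i)=\min(i,\,m+n-i)=i$. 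The $i=0$ case (where CSP is unavailable for a single place) is handled by a further product involving $\SL_2(\calo_R)$ for an \emph{infinite} set $R$ of primes with nonempty complement. None of this requires Bux--Wortman, inner forms, or any Brauer-group computation.

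Your proposed replacement construction fails on several concrete points. First, the local behaviour of $\SL_1(D)$ is the reverse of what you need: a division algebra over a global function field ramifies at only finitely many places, so $\SL_1(D)$ is anisotropic at finitely many places and split at almost all of them; you cannot make it ``anisotropic outside a prescribed finite set and split inside it.'' Second, if the form is $K$-anisotropic then $\bg(\calo_S)$ is a cocompact lattice in $\prod_{v\in S}\bg(K_v)$, acts properly cocompactly on the product of Bruhat--Tits buildings with finite stabilizers, and is therefore of type $F_\infty$; the Stuhler/Bux--Wortman computation of finiteness length applies to the isotropic, non-uniform case, so your mechanism for forcing $\phi=i$ collapses. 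Third, even granting the setup, matching the congruence completions of groups omitting different numbers of local factors runs into exactly the counting obstruction you yourself identified, now with finitely many discrepant local factors that have no reason to be absorbable. Finally, for $i=0$ (one place, rank one) the congruence subgroup property fails badly (the congruence kernel is infinite), so that case cannot be treated by the CSP argument at all and needs a separate idea, which your sketch does not supply.
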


The proof relies on some properties  of positive characteristic arithmetic groups.  Some remarkable results have been proven recently on the finiteness properties of these groups (cf. [BW1] [BW2]) but for our case the more classical results on $\SL_2$ suffice. So, let us formulate them in a way ready for us to use:

\begin{thm}[Stuhler \cite{St}] Fix a prime power $q$, and $\calo = \bbf_q[t]$.  Let $S$ be a set of irreducible polynomials in $\bbf_q [t]$ and
\begin{equation*}
\calo_S = \left\{ \frac{f(x)}{g(x)} \in \bbf_q (t) \left|\begin{matrix} &f(x), g(x) \in \bbf_q[t] \text{\ and\  }\\
&\text{the\ only\ irreducible\  }\\
&\text{divisors\ of\ } g(x) \text{are\ from\ } S%\}
\end{matrix}\right\}\right. .
\end{equation*}

%\Big| &f(x), g(x) \in \bbr_q[t] \text{and\ }\\
%&\text{ the\ only\ irreducible\ divisors}\\
%&\text{ of\ } g(x) \text{are\ from\ } S\}
%\end{align*}
%\end{equation*}
Then $\phi (\SL_2(\calo_S)) = |S|.$

\end{thm}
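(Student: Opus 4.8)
Put $n=|S|$ and $S'=S\cup\{v_\infty\}$, where $v_\infty$ is the place of $K:=\bbf_q(t)$ associated with the degree valuation; thus $\calo_S$ is exactly the ring of elements of $K$ that are regular at every place outside $S'$. The plan is to realize $\Ga:=\SL_2(\calo_S)$ as a non-uniform lattice acting on a product of $n+1$ trees and to extract $\phi(\Ga)$ from the geometry of its cusps. By the product formula, the diagonal embedding makes $\Ga$ a discrete subgroup --- in fact a non-uniform lattice --- of $G:=\prod_{v\in S'}\SL_2(K_v)$; for each $v\in S'$ let $T_v$ be the Bruhat--Tits tree of $\SL_2(K_v)$, a locally finite tree (regular of valence $q_v+1$, with $q_v$ the residue-field size at $v$) on which $\SL_2(K_v)$ acts with compact open cell stabilizers. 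Consequently $\Ga$ acts on the contractible $(n+1)$-dimensional $\mathrm{CAT}(0)$ cube complex $X:=\prod_{v\in S'}T_v$ with finite cell stabilizers, but non-cocompactly --- and it is exactly this failure of cocompactness that limits the finiteness length.

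Next I would bring in reduction theory. The proper $K$-parabolic subgroups of $\SL_2$ are its Borel subgroups, and there are only finitely many $\Ga$-conjugacy classes of them (finiteness of the class number); attached to these ``cusps'' one obtains, via Harder's reduction theory --- which in the tree case is essentially due to Serre --- a $\Ga$-invariant, locally finite family of pairwise disjoint horoballs in $X$, one for each rational parabolic, such that the complement $Y$ of the union of their interiors is a $\Ga$-cocompact subcomplex, together with a $\Ga$-cocompact exhaustion $X=\bigcup_j X_j$ obtained by truncating each horoball at depth $j$. A standard Morse-theoretic analysis over $X$ reduces both finiteness statements to the connectivity of the ``descending links'', which in this product-of-trees geometry are built, up to homotopy, as joins of $n+1$ discrete, disconnected pieces coming from the $n+1$ tree factors.

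The key computation is that such a join --- of $n+1$ spaces each $(-1)$-connected (nonempty) with nontrivial $\tilde H_0$ --- is $(n-1)$-connected, but by the K\"unneth formula for joins carries nontrivial $\tilde H_n$ and so is not $n$-connected. The ``$(n-1)$-connected'' half shows that $Y$ is $(n-1)$-connected; since $\Ga$ acts cocompactly on $Y$ with finite stabilizers, $\Ga$ is of type $F_n$, i.e.\ $\phi(\Ga)\ge n$.

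For the reverse inequality I would apply Brown's finiteness criterion to the exhaustion $\{X_j\}$: $\Ga$ can be of type $F_{n+1}$ only if the direct system $\{\tilde H_n(X_j)\}$ is essentially trivial, whereas every critical level of the exhaustion contributes the non-vanishing $\tilde H_n$ of the previous step, and these $n$-dimensional classes are not eventually killed --- translating them by the unipotent radical of a cusp, a subgroup isomorphic to $(\calo_S,+)$, which is an infinite-dimensional $\bbf_q$-vector space and hence not finitely generated, produces an unbounded supply of independent classes, so the relevant level-$n$ homology is not a finitely generated $\bbz\Ga$-module. Hence $\Ga$ is not $F_{n+1}$, and $\phi(\Ga)=n=|S|$. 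I expect this last step to be the crux: the realization on $X$, the reduction theory, and the join computation are laborious but essentially soft, whereas pinning down the $n$-th homology at infinity is the real content of Stuhler's theorem (the corresponding upper bounds in higher rank, due to Bux--Wortman, being considerably harder). As a sanity check, for $S=\emptyset$ only one tree remains, the single cusp subgroup $(\bbf_q[t],+)$ is already not finitely generated, and one recovers Nagao's theorem that $\SL_2(\bbf_q[t])$ is not finitely generated, i.e.\ $\phi=0$.
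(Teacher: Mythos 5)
The paper itself does not prove this statement at all: it is quoted as Stuhler's theorem and used as a black box, so your sketch can only be measured against the literature. Your outline is the standard geometric route (Serre/Harder reduction theory for the lattice $\Gamma=\SL_2(\calo_S)$ in $\prod_{v\in S\cup\{\infty\}}\SL_2(K_v)$ acting on the product of $|S|+1$ trees, Brown's filtration criterion, and connectivity of horospheres as in Bux--Wortman), which is a legitimate alternative to Stuhler's original, more algebraic argument; the lower bound ($F_{|S|}$ via a cocompact, $(|S|-1)$-connected truncation with finite stabilizers) is correctly set up, modulo the horosphere-connectivity input. But the upper bound, which you yourself identify as the crux, is asserted rather than proved, and as stated it does not work. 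Brown's criterion demands that the directed system $\{\widetilde{H}_{n}(X_j)\}$ ($n=|S|$) be \emph{not essentially trivial}, i.e.\ that for some $j$ the maps $\widetilde{H}_n(X_j)\to\widetilde{H}_n(X_{j'})$ are nonzero for \emph{all} $j'\ge j$. Producing ``an unbounded supply of independent classes'' at a given stage by translating under the cusp unipotent $(\calo_S,+)$, or observing that some homology is not finitely generated over $\bbz\Gamma$, addresses neither point: the issue is survival of classes under the inclusions $X_j\hookrightarrow X_{j'}$, not their abundance at one level, and this survival is exactly the hard content of Stuhler's theorem (and of the Bux--Wortman non-connectivity results in higher rank). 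Without an argument that the $n$-spheres in the horospheres remain essential arbitrarily deep in the filtration, the proof of ``not $F_{n+1}$'' is missing.

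A secondary, fixable inaccuracy: the descending links are not always joins of \emph{disconnected} pieces. At a vertex of a tree factor the Busemann function has exactly one neighbor going down and $q_v$ going up, so in one sweep direction the relevant link is a join of single points (contractible) and gives no obstruction; the wedge-of-$n$-spheres links (join of $n+1$ discrete sets with at least two points each) arise only for the correct direction of the Morse function and at the appropriate (critical) vertices, and the ``general position'' of the boundary point matters. Your join computation itself is fine ($(n-1)$-connected, nontrivial $\widetilde{H}_n$), but the blanket claim about the descending links needs the case analysis, since it feeds both the lower bound and the construction of the candidate surviving classes.
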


So, for $S = \emptyset, \SL_2(\calo_S) = \SL_2(\bbf_q [t])$ is of type $F_0$ but not $F_1$, i.e. not finitely generated, while if $|S| = 1$, $\SL_2(\calo_S)$ is finitely generated $(F_1)$ but not finitely presented.  On the other hand, if $|S|\ge 2$, $\SL_2(\calo_S)$ is always finitely presented.

\begin{thm}[Serre \cite{Se}] If $|S| \ge 1$, then $\SL_2(\calo_S)$ has the congruence subgroup property, i.e.
\begin{equation*} \widehat{\SL_2 (\calo_S)} = \SL_2 (\hat\calo_S).\end{equation*}
\end{thm}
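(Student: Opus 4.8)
This is a theorem of Serre, and I can only outline the strategy of its proof. Write $\Ga=\SL_2(\calo_S)$ and let $\hat\Ga$ denote its full profinite completion, while $\bar\Ga=\varprojlim_{0\ne\mathfrak a\triangleleft\calo_S}\Ga/\Ga(\mathfrak a)$ is its \emph{congruence} completion, where $\Ga(\mathfrak a)=\Ker\!\big(\Ga\to\SL_2(\calo_S/\mathfrak a)\big)$. Since each $\Ga(\mathfrak a)$ has finite index there is a canonical continuous surjection $\hat\Ga\twoheadrightarrow\bar\Ga$, whose kernel $C$ is the \emph{congruence kernel}. The asserted equality $\widehat{\SL_2(\calo_S)}=\SL_2(\hat\calo_S)$ amounts to two statements: that $\bar\Ga\cong\SL_2(\hat\calo_S)$, and that $C=1$. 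The plan is to settle the first by elementary generation, and the second in two stages — first that $C$ is central in $\hat\Ga$, then that a central $C$ must vanish.

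The identification of $\bar\Ga$ rests on the surjectivity of $\Ga\to\SL_2(\calo_S/\mathfrak a)$ for every $\mathfrak a$, which holds because $\SL_2(\calo_S)$ is generated by elementary matrices. Indeed $\calo_S$ is a Dedekind domain of arithmetic type whose unit group is \emph{infinite}: by Dirichlet's $S$-unit theorem $\operatorname{rank}\calo_S^\times=|S\cup\{\infty\}|-1=|S|\ge1$, and over such rings one has $\SL_2(\calo_S)=E_2(\calo_S)$ (Bass--Milnor--Serre, Vaserstein). Granting this, $\Ga/\Ga(\mathfrak a)\cong\SL_2(\calo_S/\mathfrak a)$, and since $\SL_2$ commutes with inverse limits, $\bar\Ga\cong\varprojlim\SL_2(\calo_S/\mathfrak a)=\SL_2(\hat\calo_S)$.

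The centrality of $C$ is the technical heart, and it is precisely here that $|S|\ge1$ cannot be dropped: for $S=\emptyset$, i.e.\ $\SL_2(\bbf_q[t])$, the unit group is finite, the group decomposes (Nagao) as an amalgam $\SL_2(\bbf_q)*_{B}B'$ with $B'$ soluble and therefore has a wealth of non-congruence finite quotients, and $C$ is infinite and highly non-central. For $|S|\ge1$ one exploits the infinitude of $\calo_S^\times$ through the conjugation relations $\operatorname{diag}(u,u^{-1})\,e_{12}(a)\,\operatorname{diag}(u,u^{-1})^{-1}=e_{12}(u^2a)$ together with $\operatorname{diag}(u,u^{-1})\in E_2(\calo_S)$: read in $\hat\Ga$, these force the relative Mennicke symbol attached to $\Ga$ to be a central, bimultiplicative function, so that any $c\in C$ centralises every elementary matrix and hence all of $\hat\Ga$. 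One organises this via the action of $\Ga$ on the Bruhat--Tits tree of $\SL_2$ over the completion of $\bbf_q(t)$ at a place $v_0\in S$, obtaining a decomposition $\Ga\cong\Delta_1*_\Theta\Delta_2$ (an amalgam, or an HNN extension) with $\Delta_i$ commensurable to $\SL_2(\calo_{S\setminus\{v_0\}})$ and $\Theta$ a congruence subgroup, along which the problem is pushed to a situation already under control.

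With $C$ central, $\hat\Ga$ is a central extension $1\to C\to\hat\Ga\to\SL_2(\hat\calo_S)\to1$, classified by a class in $H^2_{\mathrm{cont}}\!\big(\SL_2(\hat\calo_S);C\big)$; by Moore's determination of the metaplectic kernel this group is computed from the local Hilbert symbols at all places of $\bbf_q(t)$ modulo the global reciprocity relation. Because $\bbf_q(t)$ has no archimedean place and $S\cup\{\infty\}$ contains at least two places, this class is trivial, the extension splits, and $C=1$; hence $\hat\Ga=\SL_2(\hat\calo_S)$. I expect the step demanding the most work to be the proof that $C$ is central — establishing the centralising relations inside $\hat\Ga$ and assembling the tree decomposition — with the reciprocity/Moore input needed for the final vanishing a close second.
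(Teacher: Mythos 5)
The paper does not prove this statement at all: Theorem 2.3 is quoted from Serre \cite{Se} (with Remark 2.4 handling infinite $S$), so there is no internal argument to compare yours against, and your decision to present a roadmap of Serre's proof rather than a proof is the same move the paper makes. As a roadmap it is essentially faithful: split the problem into identifying the congruence completion with $\SL_2(\hat\calo_S)$, proving centrality of the congruence kernel $C$ (this is where $|S|\ge 1$, i.e.\ unit rank $|S\cup\{\infty\}|-1\ge 1$, enters, and your contrast with the Nagao amalgam for $\bbf_q[t]$ correctly explains why $S=\emptyset$ fails), and then killing the resulting central extension via Moore's metaplectic-kernel computation and reciprocity, which gives triviality here because every place of $\bbf_q(t)$ is non-archimedean.

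If this were to stand as a proof rather than a pointer to \cite{Se}, two steps as written are not right and two are missing. First, surjectivity of $\SL_2(\calo_S)\to\SL_2(\calo_S/\mathfrak{a})$ does not follow from elementary generation of the \emph{source}; you need $\SL_2(\calo_S/\mathfrak{a})=E_2(\calo_S/\mathfrak{a})$ (the quotient ring is semilocal), so that elementary matrices lift, or simply strong approximation, which also identifies $\varprojlim \Ga/\Ga(\mathfrak{a})$ with $\SL_2(\hat\calo_S)$ without invoking Vaserstein. Second, ``the extension splits, hence $C=1$'' skips a step: splitness only exhibits $C$ as a direct factor of $\widehat{\SL_2(\calo_S)}$, and one still needs an argument (in Moore's formalism, the dual $C^*$ embeds into the metaplectic kernel once $C$ is central; alternatively finiteness of abelianizations of the relevant subgroups) to pass from vanishing of the $H^2$-obstruction to triviality of $C$. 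Finally, the two genuinely hard inputs --- centrality of $C$ and the computation of the metaplectic kernel --- are only named, not proved; your description of the centrality step (unit conjugation on elementary matrices, Mennicke-symbol relations, a tree decomposition at a place of $S$) gestures at the right ingredients but is nowhere near an argument. None of this is a criticism of the strategy, which is Serre's; it just means your proposal, like the paper, ultimately rests on \cite{Se}.
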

Let us spell out the concrete meaning of the last result: $\hat\calo_S$-the profinite completion of $\calo_S$ is equal to: $\mathop{\Pi}\limits_{p\notin S} (\calo_S)_{\hat p}$ where $(\calo_S)_{\hat p}$ is the completion of $\calo_S$ with respect to the topology of $\calo_S$ determined by the ideal $(p)$ generated by the irreducible polynomial $p$  (and its powers).  It is easy to see that $(\calo_S)_{\hat p} \simeq \calo_{\hat p}$.

\begin{remark} If $S$ is an infinite set of irreducible polynomials, and $\calo_S$ is defined in the same way as for a finite set, then Theorem 2.3 is still valid for $\SL_2(\calo_S)$.  This follows either from the proof in [Se] or from the fact that such $\SL_2 (\calo_S)$ is the union of $\SL_2(\calo_{S'})$ where $S'$ runs over the finite subsets of $S$. Note also that as long as $S$ is not the set of {\bf all} irreducible polynomials in $\bbf_q[x]$, $\SL_2(\calo_S)$ is residually finite since $\SL_2 (\calo_S) \hookrightarrow \SL_2 ((\calo_S)_{\hat p} )$, for every $p \notin S$,
and the latter group is a profinite group.  Of course, if $S$ is the set of all irreducible polynomials then $\calo_S = \bbf_q (t)$ and $\SL_2 (\bbf_q(t))$ has no finite index subgroup (in fact, it is simple mod its center).
\end{remark}

Before moving on to the proof of the theorem, we need the following lemma which is surely well known to experts.  We were not able to allocate an explicit reference.  We are grateful to Shmuel Weinberger who showed us how to deduce it quickly from \cite{Wa}.

\begin{lemma} Let $\Ga_1 $ and $\Ga_2$ be two countable groups.  Then $\phi(\Ga_1\times \Ga_2) = \min(\phi(\Ga_1), \phi(\Ga_2))$.
\end{lemma}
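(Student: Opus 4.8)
The plan is to prove both inequalities in $\phi(\Ga_1 \times \Ga_2) = \min(\phi(\Ga_1), \phi(\Ga_2))$ separately, working at the level of Eilenberg--MacLane complexes and their universal covers.

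First I would establish $\phi(\Ga_1 \times \Ga_2) \ge \min(\phi(\Ga_1), \phi(\Ga_2))$. Write $m = \min(\phi(\Ga_1), \phi(\Ga_2))$ and suppose $m < \infty$ (the case $m = \infty$ is handled by the same construction applied in every degree). Choose $K(\Ga_i,1)$ complexes $X_i$ with finite $m$-skeleton for $i = 1, 2$. The product $X_1 \times X_2$ is a $K(\Ga_1 \times \Ga_2, 1)$, and one checks that its $m$-skeleton involves only cells of $X_1$ and $X_2$ of dimension $\le m$; since each $X_i$ has finitely many such cells, $X_1 \times X_2$ has finite $m$-skeleton, giving $\Ga_1 \times \Ga_2$ property $F_m$. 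One technical point is that the CW structure on a product of CW complexes requires one of them to be locally finite, or one passes to the CW structure on $|X_1| \times |X_2|$ via a compactly generated topology; alternatively, build $X_1, X_2$ as simplicial complexes and take the product simplicial structure, which is cleanest.

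Next I would prove the reverse inequality $\phi(\Ga_1 \times \Ga_2) \le \min(\phi(\Ga_1), \phi(\Ga_2))$; by symmetry it suffices to show that $\Ga_1 \times \Ga_2$ of type $F_m$ forces $\Ga_1$ of type $F_m$. Here is where the reference to Wall \cite{Wa} enters. Recall Wall's homological--finiteness criterion: a group $\Ga$ is of type $F_m$ if and only if it is of type $F_{m-1}$ and the $(m-1)$st homotopy/homology module of the universal cover of an $F_{m-1}$-complex is finitely generated over $\bbz\Ga$; more usefully, $\Ga$ is of type $F_m$ iff it is finitely presented-etc.\ and $H_i(\Ga; \bbz\Ga) $-type conditions hold, and crucially $F_m$ is equivalent to: $\bbz$ admits a projective (equivalently free) resolution over $\bbz\Ga$ that is finitely generated in degrees $\le m$. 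Now if $\Ga_1 \times \Ga_2$ is of type $F_m$, then $\bbz$ has a $\bbz[\Ga_1 \times \Ga_2] = \bbz\Ga_1 \otimes_\bbz \bbz\Ga_2$-resolution $P_\bullet$ that is finitely generated in degrees $\le m$. The augmentation $\bbz\Ga_2 \to \bbz$ lets us form $\bbz\Ga_1 \otimes_{\bbz[\Ga_1\times\Ga_2]} P_\bullet$, i.e.\ we restrict scalars along $\bbz\Ga_1 \hookrightarrow \bbz[\Ga_1 \times \Ga_2]$ and then base change; because $\bbz\Ga_2$ is free (hence flat) as a $\bbz$-module and $\bbz$ has finite homological presentation issues are avoided, this yields a $\bbz\Ga_1$-resolution of $\bbz$ finitely generated in degrees $\le m$, so $\Ga_1$ is of type $F_m$. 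The main obstacle is making this base-change argument rigorous: one must verify that $\bbz \otimes_{\bbz\Ga_2} (-)$ applied to the flat $\bbz\Ga_2$-module $\bbz[\Ga_1 \times \Ga_2]$ is exact and preserves the resolution property, which is exactly the kind of ``flat base change for group rings'' statement that Wall isolates; this is the step I would expect to need the most care, and it is presumably why the authors cite \cite{Wa} rather than reproving it.

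Combining the two inequalities gives the lemma. I would present the first (easy) direction in full via simplicial products, and for the second direction either cite Wall's criterion directly or spell out the change-of-rings argument above, flagging that the delicate point is the flatness/base-change bookkeeping over the group ring $\bbz[\Ga_1 \times \Ga_2]$.
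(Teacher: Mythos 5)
Your first inequality, $\phi(\Ga_1\times\Ga_2)\ge\min(\phi(\Ga_1),\phi(\Ga_2))$ via the product of Eilenberg--MacLane complexes, is fine and is exactly what the paper does. The converse direction, however, contains a genuine error, not just a bookkeeping issue. The module $\bbz\Ga_1$, viewed as a module over $\bbz[\Ga_1\times\Ga_2]\cong\bbz\Ga_1\otimes_\bbz\bbz\Ga_2$ through the augmentation of the $\Ga_2$-factor, is \emph{not} flat over $\bbz[\Ga_1\times\Ga_2]$. If $P_\bullet$ is a projective $\bbz[\Ga_1\times\Ga_2]$-resolution of $\bbz$, then $\bbz\Ga_1\otimes_{\bbz[\Ga_1\times\Ga_2]}P_\bullet$ computes $\mathrm{Tor}_*^{\bbz[\Ga_1\times\Ga_2]}(\bbz\Ga_1,\bbz)$, and taking $P_\bullet$ to be the tensor product of resolutions for the two factors shows this is $H_*(\Ga_2;\bbz)$ in positive degrees; so your tensored complex is a resolution of $\bbz$ only when $\Ga_2$ is acyclic (the flatness of $\bbz\Ga_2$ over $\bbz$ is beside the point -- the failure of exactness comes from $\bbz\otimes_{\bbz\Ga_2}-$). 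A second, smaller inaccuracy: ``$\bbz$ admits a projective resolution finitely generated in degrees $\le m$'' is the property $FP_m$, not $F_m$; by Bestvina--Brady there are $FP_2$ groups that are not finitely presented, so even a repaired homological argument would still owe a separate (easy) argument that a direct factor of a finitely presented group is finitely presented.

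The paper's use of Wall is different from what you guessed and avoids all of this. Since $\Ga_1$ is a retract of $\Ga_1\times\Ga_2$, the complex $K(\Ga_1,1)$ is \emph{dominated} by $K(\Ga_1\times\Ga_2,1)$ (compose the maps induced by inclusion and projection; the composite induces the identity on $\pi_1$ of aspherical complexes, hence is homotopic to the identity). Wall's Theorem A states precisely that a space dominated by a complex with finite $m$-skeleton is itself homotopy equivalent to one with finite $m$-skeleton; hence if $\Ga_1\times\Ga_2$ is of type $F_m$ so is each factor, which is the inequality you are missing. If you insist on an algebraic route for the $FP_m$ part, you would need something like Bieri's homological criterion applied to retracts, not the naive base change along the augmentation.
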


\begin{proof} By a result of Wall [Wa, Theorem A, p.~58] it is equivalent for a homotopy type to have property $F_m$ or to be dominated by a space of type $F_m$.  Thus, if $\Ga_1\times \Ga_2$ has type $F_m$ so do $\Ga_1$ and $\Ga_2$, i.e. $\phi(\Ga_i) \ge \phi(\Ga_1\times \Ga_2)$.  On the other hand, if both $K(\Ga_1, 1)$ and $K(\Ga_2, 1)$ have finite $m$-skeleton, so does $K(\Ga_1 \times \Ga_2, 1) = K(\Ga_1, 1) \times K(\Ga_2, 1)$ and hence $\phi(\Ga_1\times \Ga_2) \ge \min \{ \phi (\Ga_i) | i = 1, 2\}$ and the Lemma is proven.
\end{proof}

We are now ready to prove Theorem 2.1:  Fix $1\le n \in \bbn$ and fix a set of $n$  irreducible polynomials $S = \{ p_1, \ldots, p_n\}$ in $\bbf_q[x]$.  Denote for $i = 1,\ldots, n$, $S_i = \{ p_1,\ldots, p_i\}$.  Now choose for some $m\ge n$, a set $T$ of $m$ irreducible polynomials  with $T\cap S = \emptyset$.  For $i = 1,\ldots, n$, denote $T_i = T \cup \{ p_{i + 1,\ldots,} p_n\}$, so $T_n = T$ and $S_i \cup T_i = S \cup T$.  Finally let $\Ga_i = \SL_2 (\calo_{S_i}) \times \SL_2 (\calo_{T_i})$.

We claim:
(a)\ \  $\phi (\Ga_i) = i$.  Indeed by Theorem 2.2, $\phi (\SL_2(\calo_{S_i})) = i$ while $\phi (\SL_2(\calo_{T_i})) = m + n - i$.
Hence, by Lemma 2.5,  $\phi (\Ga_i) = \min (i, m+n-i) = i$.

(b) \ $\hat\Ga_1\simeq \hat \Ga_2 \simeq \cdots \simeq \hat\Ga_n$.  In fact, as $S_i \cup T_i = S\cup T$, we have by Theorem 2.3 and the explanation following it:
\begin{align*} \hat\Ga_i =& \prod\limits_{p\notin S_i} \SL_2 (\calo_{\hat p}) \times \prod_{p\notin T_i} \SL_2 (\calo_{\hat p}) \\
\cong& \prod_{p\notin T\cup S} (\SL_2 (\calo_{\hat p}) \times \SL_2 (\calo_{\hat p}))
\times  \prod\limits_{p \in T\cup S} \SL_2(\calo_p).\end{align*}
This shows that the isomorphism type of the profinite completion is independent of $i$.

We still have to show that there exists a countable group $\Ga_0$ which is residually finite and not finitely generated (so $\phi (\Ga_0) = 0)$ and with the same profinite completion as of $\Ga_1, \ldots, \Ga_n$.  To this end let $R$ be an infinite set of irreducible polynomials in $\bbf_q [t]$, whose complement $\bar R$ is nonempty.
In this case $\SL_2 (\calo_R)$ is an ascending union of finitely generated groups and  hence not finitely generated. Still by Remark 2.4, Serre's Theorem applies and both $\SL_2 (\calo_R)$ and $\SL_2 (\calo_{\bar R})$ have the congruence subgroup property. Hence
\begin{equation*}
\widehat{\SL_2 (\calo_R) \times \SL_2 (\calo_{\bar R})} = \SL_2 (\hat\calo_R) \times \SL_2 (\hat\calo_{\bar R}) = \prod_p \SL_2 (\calo_{\hat p})\end{equation*}
where this time $p$ runs exactly once over all the irreducible polynomials in $\bbf_q [t]$.

Let us now take $\Ga_0 = \SL_2 (\calo_{T\cup S}) \times \SL_2 (\calo_R) \times \SL_2 (\calo_{\bar R})$.
This is not a finitely generated group and its profinite completion is
\begin{align*} &\prod_{p\notin T\cup S} \SL_2 (\calo_{\hat p}) \times \prod_{\text{ all\ } p} \SL_2 (\calo_{\hat p}) \\
= &\prod_{p\notin T\cup S} (\SL_2 (\calo_{\hat p}) \times \SL_2 (\calo_{\hat p}))
\times
\prod_{p\in T\cup S} \SL_2 (\calo_{\hat p}).
\end{align*}
So, $\hat \Ga_0$ is isomorphic to $\hat \Ga_i$ for all $i = 1,\ldots, n$ and Theorem 2.1 is proven.

%%%%%%%%%%%%%%%%%%%%
\subsection*{Proof of Proposition 1.4}

Let us first say that by residually solvable (resp. nilpotent, $p$) we mean here that the homomorphisms to {\bf finite} solvable (resp.~nilpotent, $p$) groups separate the points of the group.  But, actually the proof will work also in the other sense, i.e. when no finite assumption is made.  Anyway Proposition 1.4 is somewhat surprising since it shows that there are  residually finite groups $\Ga_1$ and $\Ga_2$  with the same finite quotients, and in particular, the same finite solvable quotients; still, for $\Ga_1$, the finite solvable quotients separate its points, while for $\Ga_2$ they do not.

For the proof we will use again the notations used in the proof of Theorem 2.1.  For simplicity, assume $q \ge 4$.
 Let $S_1 = \{ p_1, p_2\}$ be a set of two primes in $\bbf_q [x]$.
 Write $\Ga_S $ for $\SL_2 (\calo_S)$ and for a prime $p$, $p\notin S, \; \Ga_S(p) = \Ker \big(\SL_2(\calo_S) \to \SL_2(\calo_S/(p))\big)$, the congruence subgroup $\mod p$.

 Let now $p_3, p_4$ be two different primes not in $S$. Denote
 \begin{align*}
 &\Ga_1 = \Ga_S(p_3) \times \Ga_S (p_4) \\
  \text{and\ } &\Ga_2 = \Ga_S \times \Ga_S (p_3p_4),\\
   \text{where\ }  &\Ga_S(p_3p_4) = \Ga_S(p_3) \cap \Ga_S (p_4)
 \end{align*}
is the congruence subgroup $\mod p_3p_4$.
Now, $\Ga_1$ is a finitely presented residually $-p$ group.  Indeed $\Ga_S (p_3)$ is a subgroup of its closure in $\SL_2(\calo_S)_{\hat p_3})$, i.e. $\SL_2$ over the  $p_3$-closure of $\calo_S$.
But it is inside $\Ker \big(\SL_2 (\calo_S)_{\hat p_3} \to \SL_2((\calo_S)_{\hat p_3}/(p_3))\big)$ - the $p_3$-congruence subgroup which is a pro-$p$ group (this time $p$ is the rational prime such that $q=p^e$ for some $e$).  Hence $\Ga_S(p_3)$ is residually-$p$ group.  The same holds for $\Ga_S(p_4)$.  On the other hand, $\Ga_S$ has no non-trivial solvable quotient.  Indeed, by a well known result of Margulis, every normal subgroup of $\Ga_S$ is either  finite or of finite index, \cite[Chap.~VIII, Theorm (2.6), p.~265]{Ma}.  Moreover, from the affirmative solution of the congruence subgroup problem we deduce that every non-trivial finite quotient of $\Ga_S$ is mapped onto $\PSL_2(q^a)$ for some $a\ge 1$.  As $q\ge 4$, these are non-abelian finite simple groups.  This implies that $\Ga_2$ is not residually solvable.

Finally, by a similar argument as in the proof of Theorem 2.1, we see that
\begin{align*}
\hat\Ga_1=& \hat\Ga_S (p_3) \times \hat \Ga_S(p_4) = \prod_{p \neq p_3} \SL_2 (\calo_{\hat p})
\times \Ker \big(\SL_2(\calo_{\hat p_3})\to \SL_2(\calo_{\hat p_3} / (p_3))\\
&\times  \prod_{p \neq p_4}\SL_2 (\calo_p) \times \Ker \big(\SL_2(
\calo_{\hat p_4})\to \SL_2 (\calo_{\hat p_4}/ (p_4)\big) \\
\cong& \prod_{p \neq p_3,p_4} \SL_2 (\calo_{\hat p})^2\times\SL_2 (\calo_{\hat p_3}) \times \SL_2 (\calo_{\hat p_4})\\
&\times  \Ker (\SL_2(\calo_{\hat p_3}) \to (\SL_2(\calo_{\hat p 3}/ (p_3))) \times \Ker
(\SL_2(\calo_{\hat p_4}) \to \SL_2 (\calo_{\hat p_4} / (p_4))\big).
\end{align*}
While
\begin{align*}
\hat\Ga_2 =& \widehat{\SL_2 (\calo_S)} \times \widehat{\Ga_S(p_3p_4)}\\
=& \prod_p\SL_2(\calo_p) \times \prod_{p \neq p_3, p_4} \SL_2 (
\calo_p) \\
&\times  \Ker(\SL_2 (\calo_{\hat p_3} \to \SL_2 (\
\calo_{\hat p_3} / (p_3))\\
&\times  \Ker (\SL_2 (\calo_{\hat p_4}) \to \SL_2 (\calo_{p_4} / (p_4)) \end{align*}
and therefore 
$$   \hat \Ga_2 \simeq \hat\Ga_1.$$

Proposition 1.4 is now proven since $\Ga_1$ is residually finite-$p$, while $\Ga_2$ is not even residually solvable.
\qed
\section{Arithmetic groups of zero charactistic}

Let us start with proving Proposition 1.5:

This time let $\Gamma_S = \SL_2 (\bbz_S)$ where $S$ is a finite set of rational primes and
\begin{equation*}
\bbz_S = \left\{ \frac ab\,  \Big| \,  a, b \in \bbz \text{\ and\ all\ prime\ divisors\ of\ } b \text{\ are\ in\ } S\right\}.
\end{equation*}
As $\Ga_S$ contains $\SL_2(\bbz)$, it has nontrivial  torsion. But for every $2 \neq \ell \in \bbz$ which is not in $S$, the congruence subgroup $$ \Ga_S(\ell) = \Ker (\SL_2(\bbz_S) \to \SL_2 (\bbz_S / \ell \bbz_S))$$ is torsion free.

By a result of Serre \cite{Se} $\SL_2 (\bbz_S)$ has the congruence subgroup property whenever $S \neq \emptyset$.  This means that $\widehat{\SL_2(\bbz_S)} \simeq \SL_2 (\hat\bbz_S) = \prod\limits_{p\notin S}\SL_2(\bbz_p)$ when $\bbz_p$ is the ring of $p$-adic integers.  Now, for $\Gamma_S (\ell)$ we have
(still assuming $S\neq\emptyset$):
\begin{equation*} \widehat{\Ga_S(\ell)} \cong \Big(\prod\limits_{\underset{p \nmid \ell}{p\notin S}} \SL_2(\bbz_p)\Big)\times \prod\limits_{\underset{p \mid \ell}{p\notin S}} \Ker (\SL_2(\bbz_p) \to \SL_2(\bbz_p/\ell\bbz_p)
\end{equation*}

Now let $$S = \{ 7\}, \, \ell = 15 = 3\cdot 5, \, \Ga_1 = \SL_2 (\bbz_S) \times \Ga_S (\ell) \text{\ and\ } \Ga_2 = \Ga_S (3) \times \Ga_S (5).$$
Clearly $\Ga_1$ has torsion while $\Ga_2 $ does not.  Moreover $\Ga_1$ has nontrivial center, while $\Ga_2$ is centerless.  Still
 \begin{align*}\hat\Ga_1 = \Big( \prod_{p\neq 7} &\SL_2 (\bbz_p) \Big)\times \Big( \prod_{p\neq 7, 3, 5} \SL_2 (\bbz_p)\Big)\\ &\times \Ker(\SL_2(\bbz_3) \to \SL_2 (\bbf_3))
\times \Ker (\SL_2 (\bbz_5) \to \SL_2 (\bbf_5))\end{align*}
while
\begin{multline*}\hat\Ga_2 = \Big( \prod_{p\neq 3,7} \SL_2 (\bbz_p) \Big)\times \Ker\big(\SL_2(\bbz_3) \to \SL_2 (\bbf_3)\big)\\
\times \Big( \prod_{p\neq  5,7} \SL_2 (\bbz_p)\Big)
\times \Ker (\SL_2 (\bbz_5) \to \SL_2 (\bbf_5)).\end{multline*}
So both groups $\hat \Ga_1$ and $\hat \Ga_2$ are isomorphic to:
%\begin{align*}
\begin{multline*} \prod_{p\neq 3,5,7} \Big(\SL_2 (\bbz_p)\times \SL_2 (\bbz_p)\Big) %&
\times \SL_2(\bbz_3) \times \SL_2(\bbz_5)\\
%&
\times \Ker \big(\SL_2(\bbz_3) \to \SL_2 (\bbf_3)\big)
\times\big( \Ker \SL_2 (\bbz_5) \to \SL_2 (\bbf_5)\big).\end{multline*}
%\end{align*}

This proves the proposition.
\subsection*{Proof of Proposition 1.6}

The efforts to answer the Grothendieck problem (cf. \cite{Gr},\cite{PT},\cite{BL},\cite{Py},\cite{BG}) have led to a number of methods and results of the following kind:  There exist finitely generated residually finite groups $\Ga_1$ and $\Ga_2$ with an injective map $\vp: \Ga_1 \to \Ga_2$, such that the induced map $\hat \vp:\hat\Ga_1\to \hat\Ga_2$ is an isomorphism while $\Ga_1$ is not isomorphic to $\Ga_2$.

Let us recall the construction from \cite{BL}:  there $\Ga_2 = L \times L$ when $L$ is a cocompact torsion free lattice in $G = Sp(n, 1)$ or $G = F_4^{(-20)}$. In particular the cohomological dimension $cd (\Ga_2) = 2cd(L)$ and $cd(L) = \dim(G/K)$ when $K$ is a maximal compact subgroup of $G$.  On the other hand $\Ga_1$ is obtained as follows:  Let $\pi:L\to M$ be an infinite finitely presented quotient of $L$ such that $M$ has no finite index subgroup. (Such a quotient $M$ exists by [Ol] as $L$ is hyperbolic group.)  Let $\Ga_1$  be the fiber product over $\pi$, i.e. $\Ga_1 = \{ (a, b) \in L \times L \Big| \pi(a) = \pi(b)\}$.  Then $\Ga_1$ is of infinite index, so $cd(\Gamma_1)  \lneqq cd (\Ga_2) = 2 cd (L)$ containing the diagonal subgroup (and so $cd(\Gamma_1) \ge cd(L)$).  It is shown in \cite{BL}that $\hat\Ga_1 \simeq \hat\Ga_2$  and hence Proposition 1.6 follows.

Let us remark, that our result here is not as strong as Theorem 1.2. We do not know to give, for arbitrary $r$ and $s$ in $\bbn$, examples of $\Ga_1$ and $\Ga_2$ with $cd(\Gamma_1) = r$, $cd (\Ga_2) = s$ and $\hat\Ga_1\simeq \hat\Ga_2$. This is probably a difficult problem: recall that $cd(\Ga) = 1$ if and only if $\Ga$ is a free group.  It is a long-standing open problem (usually attributed to Remeslenikov, cf.\cite{GZ}) whether freeness is a profinite property.

\section{Remarks and problems}

We have discussed throughout the paper only countable groups and especially finitely generated groups (which is the most interesting case for our problem) but one can also say something about uncountable groups:

By the recent remarkable result of Nikolov and Segal \cite{NS}, every finite index subgroup of a finitely generated profinite group $G$ is open.  This means that $\hat G = G$.  Applying this for $G = \hat \Ga$ the profinite completion of a finitely generated discrete group $\Ga$, we deduce that $\hat \Ga \simeq \hat G$.  Hence for every finitely generated infinite, residually finite discrete group $\Ga$, we deduce that $\hat \Ga \simeq \hat G$.  Hence for every finitely generated discrete group $\Ga$ there exists an uncountable group $G$ with $\hat \Ga = \hat G$.
On the other hand we do not know if the following is true: For every finitely generated  residually finite group $\Ga$ there exists a countable {\bf non} finitely generated (but residually  finite) group $G$, with $\hat \Ga = \hat G$.

In light of Remeslenikov's problem mentioned in the previous section, this would be an interesting problem to know if such $G$ exists when $\Ga$  is a finitely generated free group.

The main interest of \cite{BR}  is with pairs of groups $\Ga_1$ and $\Ga_2$ which have the same ``nilpotent genus" (i.e. for every $m \in \bbn, \; \, \Ga_1/\Ga_1^{(m)} \simeq \Ga_2/\Gamma_2^{(m)} $ where $\Gamma_i^{(m)} $ is the $m$ term of the lower central series of $\Ga_i$.) This implies (though not equivalent) to having the same pro(finite) nilpotent completion.
 Of course, if the profinite completions are isomorphic so are the pronilpotent completions.  But the examples we presented in the proofs of the results of this paper are usually not residually nilpotent.  This can be fixed quite easily by switching each time to a suitable congruence subgroup as we did in the proof of Proposition 1.4 (and recalling that proper principal congruence subgroups are always residually nilpotent).  We leave the details to the reader. This is usually easy except in the proof of Proposition 1.5, where some care has to be taken: Here it is important to replace $\Ga_1$ and $\Ga_2$ by their mod 2 congruence subgroups, as we want that the new $\Ga_1$ still has torsion and center.  (This is the reason  we presented the proof there with $S=\{ 7 \}$ and not with $S = \{ 2 \}$.  Of course for proving the original Proposition 1.5, we could use $S = \{2\}$ or $S = \{p\}$ for any prime $p\neq 3, 5$.  It is only for the pronilpotent version that it is important not to use $S = \{2\}$ as for this case $\Ga_S$ has {\bf no} mod 2 congruence subgroup.)

There is however an interesting difference between our residually nilpotent groups and the ones presented in \cite{BR}. There, all examples are residually torsion free nilpotent.  But our examples are never such.  The examples $\Ga_i$ presented for Theorems 1.2 (or 2.1) and Propositions 1.4 and 1.5 have the property $(FAb)$ i.e., for every finite index subgroup $\triangle,\; \, \triangle/[\triangle, \triangle]$ is finite, since they have the congruence subgroup property.  The same holds for $\Ga_2$ of the proof of Proposition 1.6, since $\Ga_2$ has Kazhdan property $T$.  (It follows that $\Ga_1$ also has $(FAb)$, since $(FAb)$ is clearly a profinite property). For all our examples, when $\Ga_1$ and $\Ga_2$ have the same pronilpotent completion they also have the same nilpotent genus.

\

\end{document}